\title{ The Continuity of  $\mathbb{Q}_+$-Homogeneous Superadditive Correspondences \\[0.3cm]}
\author{{Masoumeh Aghajani, \,\,\,Kourosh Nourouzi \thanks{ Corresponding
author } \thanks {e-mail: nourouzi@kntu.ac.ir; fax: +98 21
22853650}
%\thanks {This research was in part supported by a grant from IPM (No. 87460021)}
}
\\[0.4cm]
{ \em  Department of Mathematics,  K. N. Toosi University of Technology,}\\
{\em P.O. Box 16315-1618, Tehran, Iran.}\\
 % \and
%{\em $^2$ School of Mathematics, Institute for Research in
%Fundamental Sciences (IPM),}\\
%{\em P.O.Box 19395-5746, Tehran, Iran.}
}
\newenvironment{proof}{\noindent {\em {Proof .}}}{$\square$
\medskip}
\newtheorem{definition}{Definition}
\newtheorem{ex}{Example}
\newtheorem{thm}{Theorem}
\newtheorem{lem}{Lemma}
\begin{document}

\maketitle \begin{abstract} In this paper we investigate the
continuity of $\mathbb{Q}_+$-homogeneous superadditive
correspondences defined on the cones with  finite basis in real
topological vector spaces. It is also shown that every
%lower semicontinuous and $\mathbb{Q}_+$-homogeneous
superadditive correspondence  between two cones with finite basis
admits  a family of continuous linear selections.

\end{abstract}

\renewcommand{\baselinestretch}{1.1}
\def\thefootnote{ \ }

\footnotetext{{\em} $2010$ Mathematics Subject Classification.
Primary: 54C60, Secondary: 54C65
\\
\indent {\em Key words}: Cone; Superadditive correspondence;
Continuity; selection}

\section{Introduction and Preliminaries}
Let $X$ be a real vector space. A subset  $C$ of $X$ is called  a
cone  if $tx\in C$ for each $x\in C$ and $t>0$. Moreover, a cone
$C$ of $X$ is called a convex cone  if it is convex. If $E$ is a
linearly independent (finite) subset of $X$ such that
$$C=\{\sum_{i=1}^n \lambda_i x_i ;\lambda_i\geq0, x_i\in E, n\in
\mathbb{N}\},$$ then $E$ is said to be a (finite) cone-basis
(briefly, basis) of $C$.

  By a correspondence $\varphi$ from a set $X$ into set $Y$, denoted
  $\varphi:X\twoheadrightarrow Y$, we mean a set-valued function
  $\varphi:X\rightarrow 2^Y\setminus \{\emptyset\}$.

Superadditive correspondences defined on semigroups were
investigated in \cite{wsm} and, in particular, the
Banach-Steinhaus  theorem  of uniform boundedness was extended to
the class of lower semicontinuous and $\mathbb{Q}_+$-homogeneous
correspondences in cones. In \cite{sm},
 it is shown that every superadditive correspondence  from a cone with a basis of a real
 topological vector space into the family of all
 convex, compact  subsets  of a locally convex space admits an additive
 selection. For more information the reader is also referred to \cite{kn, ol, smd}.

 In the present note we first
%show that Lemma 1 in \cite{sm} is not valid in general by giving acounterexample and then
investigate a more general form of Lemma 1 in \cite{sm}, when the
range of a superadditive correspondence is a finite dimensional
space. We also show that every lower semicontinuous superadditive
correspondence from a cone with a basis of a real
 topological vector space into the family of all
 convex, compact  subsets of a locally convex space admits a
 linear selection. Finally, for every  superadditive correspondence
 defined between two cones with finite basis we find a
family of linear continuous selections.

% $\spadesuit$ In the present note
%we bring a counter example for Lemma 1 in \cite{sm}, and by adding lower semicountinuity of the
% correspondence we investigate it
%%Lemma 1 in \cite{sm}
%when the range of a superadditive correspondence is a finite dimensional space
%and then we give a result on   the continuity of $\mathbb{Q}_+$-homogeneous superadditive
%correspondences. We also find
%a family of linear continuous selections for every superadditive
%correspondence defined  between two cones with finite basis.$\spadesuit$

  We commence some notations and basic concepts.

  Throughout the paper, we assume that $X$ and $ Y$  are real Hausdorff  topological vector spaces and $C$ is  a convex cone in $X$,
  unless otherwise stated. By notations $\varphi:C\rightarrow c(Y)$ and  $\psi:C\rightarrow cc(Y)$ we mean the correspondences $\varphi:C\twoheadrightarrow Y$ and $\psi:C\twoheadrightarrow Y$
 with   compact and
convex, compact values, respectively.
\begin{definition}{\rm
A correspondence $\varphi:C\twoheadrightarrow Y$ is additive if
$\varphi(x+y)=\varphi(x)+\varphi(y)$, for every $x,y\in C$ and is
superadditive if
 $\varphi(x+y)\supseteq\varphi(x)+\varphi(y)$, for every $x,y\in C$.}
\end{definition}

\begin{definition}{\rm
A correspondence $\varphi:C\twoheadrightarrow Y$ is
$\mathbb{Q}_+$-homogeneous if $\varphi(rx)=r\varphi(x)$ for each
$x\in C$ and $r\in \mathbb{Q}_+$. If $\varphi(tx)=t\varphi(x)$ for
each $x\in C$ and $t>0$, it is called positively homogeneous.}
\end{definition}

Every additive and positively homogeneous correspondence
$\varphi:C\twoheadrightarrow Y$ is called linear.

%\begin{definition}{\rm For each correspondence $\varphi:C\twoheadrightarrow c(Y)$
%between topological spaces, if
%\begin{itemize}
%\item The correspondence $\varphi$ is upper semicontinuous at
%$x$ if for every net $\{(x_\alpha,y_\alpha)\}$ in the graph of
%$\varphi$, that is, with $y_\alpha \in \varphi(x_\alpha)$ for
%each $\alpha$, if $x_\alpha\rightarrow x$, then the net
%$\{y_\alpha\}$ has a limit point in $\varphi(x)$.

%\item The correspondence $\varphi$ is lower semicontinuous at
 %$x$ if $x_\alpha\rightarrow x$, then for each $y \in \varphi(x)$
%there exists a subnet $\{x_{\alpha_\lambda}\}_{\lambda\in
%\Lambda}$ of the net $\{x_\alpha\}$ and a net
%$\{y_{\lambda}\}_{\lambda\in \Lambda}$ in $Y$ satisfying
%$y_\lambda\in \varphi(x_{\alpha_\lambda})$ for each $\lambda\in
%\Lambda$ and
% $y_\lambda\rightarrow y$.\end{itemize}}
%\end{definition}

Recall that a correspondence $\varphi:C\rightarrow c(Y)$ is
continuous at $x$ if it is upper and lower semicontinuous at $x$.
If it is continuous at every point $x$ then it is called
continuous on $C$ (see e.g. \cite{alip}).

 Let $(X,d)$ be a metric space and $c(X)$ be the set
of all nonempty compact subsets of $X$. Then the formula
$$\mathfrak{h}(A,B)= \max\{\sup_{a\in A} d(a,B),\sup_{b\in B}
d(A,b)\}\,\,\,\,\,\,\,\,\,\,\ (A,B \in c(X)),$$  defines a
metric, called Hausdorff metric,  on $c(X)$.

%We add, a function $f:C\rightarrow \mathbb{R}$ with
%$$f(\frac{x+y}{2})\leq \frac{f(x)+f(y)}{2}$$ or
%$$f(\frac{x+y}{2})\geq \frac{f(x)+f(y)}{2}$$ for all $x,y\in C$ is
%called Jensen-convex or Jensen-concave, respectively.

\section{Main results}

 We start with the following lemma:
\begin{lem}\label{bound} Let $\{\varphi_\alpha\}_{\alpha\in I}$
be a family of lower semicontinuous and $Q_{+}$-homogeneous
superadditive correspondences $\varphi_\alpha:C\twoheadrightarrow
Y$. If $K$ is a convex and compact subset of $C$ and
$\bigcup_{\alpha\in I} \varphi_\alpha (x)$ is bounded for every
$x\in K$, then $\bigcup_{\alpha\in I} \varphi_\alpha(K)$ is
bounded.
\end{lem}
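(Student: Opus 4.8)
The plan is to mimic the classical Banach--Steinhaus argument via a Baire-category (or rather, convexity-plus-compactness) trick, but carried out on the compact convex set $K$ rather than on a complete space. First I would fix a balanced neighborhood $V$ of $0$ in $Y$ and, for each positive integer $n$, consider the set
\[
K_n=\Bigl\{x\in K:\ \varphi_\alpha(x)\subseteq nV\ \text{for every }\alpha\in I\Bigr\}.
\]
The pointwise-boundedness hypothesis says precisely that $\bigcup_{n} K_n=K$. The key structural point is that each $K_n$ is \emph{closed} in $K$: since each $\varphi_\alpha$ is lower semicontinuous, the set $\{x:\varphi_\alpha(x)\subseteq Y\setminus W\}$-type conditions behave well, and more directly, if $x\notin K_n$ then some $\varphi_\alpha(x)$ meets the open complement of the closed set $\overline{nV}$ (after shrinking $V$ slightly to get a closed bound $\overline{(n{-}1)V}\subseteq nV$), and lower semicontinuity of that $\varphi_\alpha$ at $x$ forces the same for nearby points, so the complement of $K_n$ is open.

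Here, however, $K$ need not be a Baire space (it is compact, hence Baire, so actually this is fine). So by the Baire category theorem applied to the compact Hausdorff — wait, $K$ is a compact subset of a topological vector space, hence a Baire space — some $K_{n_0}$ has nonempty interior \emph{relative to} $K$; pick $x_0$ and a relatively open convex neighborhood $U\subseteq K_{n_0}$ of $x_0$ in $K$. Now I would exploit superadditivity and $\mathbb{Q}_+$-homogeneity to ``spread'' this local bound to all of $K$. Given an arbitrary $y\in K$, convexity of $K$ lets me write points of the form $x_0+t(y-x_0)$; for small rational $t>0$ such points lie in $U$, and using $\varphi_\alpha((1{-}t)x_0+ty)\supseteq(1{-}t)\varphi_\alpha(x_0)+t\varphi_\alpha(y)$ together with the bound on $\varphi_\alpha(x_0)$ and on $\varphi_\alpha((1{-}t)x_0+ty)$, I can solve for $\varphi_\alpha(y)\subseteq \tfrac1t\bigl(n_0V-(1{-}t)\varphi_\alpha(x_0)\bigr)\subseteq \tfrac1t(n_0V + n_0V)$, a bound independent of $\alpha$ and (for a fixed choice of $t$) locally uniform in $y$; a compactness covering of $K$ by finitely many such neighborhoods then yields a single bounded set containing $\bigcup_{\alpha}\varphi_\alpha(K)$.

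The main obstacle I anticipate is the interplay between $\mathbb{Q}_+$-homogeneity (rather than full positive homogeneity) and the convex combinations: one must be careful that the scalars $t$ and $1-t$ appearing when moving along segments in $K$ can be taken rational, or else approximate and use lower semicontinuity/closedness to pass to the limit, since $r\varphi(x)$ is only guaranteed for $r\in\mathbb{Q}_+$. A secondary technical point is verifying cleanly that the $K_n$ are closed: one should phrase lower semicontinuity in the form ``for every open $W$ with $\varphi_\alpha(x)\cap W\neq\emptyset$ there is a neighborhood of $x$ on which $\varphi_\alpha$ still meets $W$,'' apply it with $W=Y\setminus\overline{mV}$ for a suitable $m<n$, and use that in a topological vector space the closed sets $\overline{mV}$ can be nested inside the open sets $nV$. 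Once closedness of the $K_n$ is in hand, Baire plus the homogeneity/superadditivity spreading argument finishes the proof.
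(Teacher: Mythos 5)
Your plan is essentially the paper's own argument: for an arbitrary neighborhood one passes to a closed balanced $\overline U$ (so that the sets $\{x\in K:\varphi_\alpha(x)\subseteq n\overline U\ \forall\alpha\}$ are closed by lower semicontinuity), applies Baire category to the compact set $K$, and then spreads the local bound to all of $K$ via convexity, superadditivity and $\mathbb{Q}_+$-homogeneity with rational coefficients, using a finite compactness covering to get one uniform constant, and the same ``cancellation'' trick ($B\subseteq C-A$ versus the paper's $A\subseteq A+B-B$). The only small repair needed is your closedness step: the nesting $\overline{mV}\subseteq nV$ can fail in a general topological vector space, so one should simply define the sets with closed neighborhoods $n\overline V$ from the start (choosing $V$ balanced with $\overline V+\overline V\subseteq W$ for an arbitrary neighborhood $W$), which is exactly the paper's device with its $\overline U$.
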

\begin{proof}
Let $W$ be an open neighborhood of zero in $Y$. There exists an
open balanced neighborhood $U$ of zero such that
$\overline{U}+\overline{U}\subseteq W.$ If
$${{\varphi}_{\alpha}}^{-1}(\overline{U}):=\{x:{\varphi}_{\alpha}(x)\subseteq
\overline{U}\}$$ and $$E:={\cap}_\alpha
{{\varphi}_{\alpha}}^{-1}(\overline{U}),$$ then for every $x\in K$ there
exists $n\in\mathbb{N}$ such that
$${{\varphi}_{\alpha}}(n^{-1}x)\subseteq
U\subseteq\overline{U},\,\,\,\,\,\,\,\,\, (\alpha\in I).$$
Therefore ${n^{-1}x}\in E$, that is, $K\subseteq\cup_{n=1}^\infty
nE.$ Since $K=\cup_{n=1}^\infty {K\cap nE}$ is of the second
category, there exist $n\in \mathbb{N}$, $x_0\in {int}_{K}({K\cap
nE})$ and a  balanced open neighborhood $V$ of zero such that
$$(x_0 + V)\cap K\subseteq K\cap nE\subseteq nE.$$ That is,
$(x_0 + V)\cap K\subseteq nE.$ On the other hand for each $k\in K$
there exists $\lambda>0$ such that $ k\in{x_0 +\lambda V}.$ Since
$K$ is compact, there exist positive numbers
$\lambda_1,\cdots,\lambda_n$ such that $$K \subseteq{{(x_0
+{\lambda_1} V)\cup\cdots\cup(x_0 +{\lambda_n} V)}}.$$ If $\gamma$
is a rational number such that
$\gamma>\max\{\lambda_1,\cdots,\lambda_n,1\}$ , we have
$K\subseteq x_0 +\gamma V.$ For any $x\in K$,
$$z={{\frac{1}{\gamma}}x}+(1-{\frac{1}{\gamma}){x_0}}\in K,$$ and so
$$z\in (x_0+V)\cap K\subseteq nE.$$
 Hence
 $z, x_0\in nE$ and  consequently $
{{\varphi}_{\alpha}}(n^{-1}z)\subseteq\overline{U}$ and
${{\varphi}_{\alpha}}(n^{-1}x_0)\subseteq\overline{U}$ for each
$\alpha$. Since $x+(\gamma-1)x_0 =\gamma z$,
\begin{center}
\begin{tabular}{lll}
${{\varphi}_{\alpha}}(x)$&$\subseteq$&${{\varphi}_{\alpha}}(x)+(\gamma-1){{\varphi}_{\alpha}}(x_0)-(\gamma-1){\varphi_{\alpha}}(x_0)$\\[0.2cm]
&$ \subseteq
$&$\gamma{{\varphi}_{\alpha}}(z)-(\gamma-1){{\varphi}_{\alpha}}(x_0)$\\[0.2cm]
&$ \subseteq $&$\gamma n\overline{U}+(\gamma-1)n\overline{U}$\\[0.2cm]
&$ \subseteq $&$\gamma n\overline{U}+\gamma n\overline{U}$\\[0.2cm]
&$ \subseteq $&$\gamma n{W},$
\end{tabular}
\end{center}
for each $\alpha$. Thus $\bigcup_\alpha
{\varphi_\alpha}(K)\subseteq \gamma nW$,
 that is $\bigcup_\alpha
{\varphi_\alpha}(K)$ is bounded.
\end{proof}

Hereafter we assume that  $C$ is also  with a finite cone-basis,
unless otherwise stated. We shall  denote by $L$ the subspace of
$X$ spanned by the finite cone-basis of $C$.
 %Lemma 1 of  \cite{sm} plays a key role to present the main
%results of that paper.  As example below shows this result is not
%valid in general.
%\begin{ex}  {\rm Define
%$\varphi:[0,\infty)\twoheadrightarrow [0,\infty)$ by
%$$\varphi(x)=\left\{%
%\begin{array}{ll}
   % \{0\} &  {x\in \mathbb{Q};} \\
 % \{t:0\leq t\leq x\} & {\mbox{otherwise}.} \\
%\end{array}%
%\right.$$ Obviously $Q_{+}$-homogeneous superadditive
%correspondence $\varphi$ is neither continuous and nor positively
%homogeneous on $(0,\infty)$.}
% \end{ex}

%$\clubsuit$We note that existence of a counter example for Lemma
%1 in \cite{sm}, refers to existence of  mistakes in Theorem C in
%\cite{sm}, Lemma 2 in \cite{m} and automatic continuity of
%Jensen-convex functions which are above bounded (or even bounded)
%in some open interval $(a,b)$. Considering the functions
%$g:(0,+\infty)\rightarrow \mathbb{R}$ by $g(x)=0$ for each $x>0$
%and
%$h:(0,+\infty)\rightarrow \mathbb{R}$ by $$h(x)=\left\{%
%\begin{array}{ll}
    %0 &  {x\in \mathbb{Q}_+,} \\
 % 1 & {\mbox{otherwise}} \\
%\end{array}%\right.$$ in Theorem C in \cite{sm}, we should have an additive
%function $a:\mathbb{R}\rightarrow \mathbb{R}$, a continuous and
%convex function $g_1:(0,+\infty)\rightarrow \mathbb{R}$ and
%continuous and concave function $h_1:(0,+\infty)\rightarrow
%\mathbb{R}$ such that $g(x)=g_1(x)+a(x)$, $h(x)=h_1(x)+a(x)$ for
%each $x>0$ which is impossible, because $h$ is not continuous.
%Also as an easy counter example $-h$ is a discontinuous
%Jensen-convex function which is bounded on $(0,+\infty)$. Hence
%$-h$ is a counter example for Lemma 2 in \cite {m}. $\clubsuit$

 The next lemma gives  a general form of Lemma 1 presented
in \cite{sm}.
\begin{thm} \rm{\cite{alip}} Let $C$ be a topological space, $Y$
be a metric space and that $\varphi:C\rightarrow c(Y)$. Let $c(Y)$
be endowed with Hausdorff metric topology. Then the function
$f:C\rightarrow c(Y)$ defined by $f(x)=\varphi(x)$ for each $x\in
C$, is continuous in Hausdorff metric topology if and only if
$\varphi$ is continuous.
\end{thm}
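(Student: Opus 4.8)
The plan is to translate the two one-sided semicontinuity conditions into statements about $\varepsilon$-enlargements and then match them with the two terms defining $\mathfrak{h}$. Throughout, for a set $A\subseteq Y$ and $\varepsilon>0$ write $A_\varepsilon=\{y\in Y:\ d(y,A)<\varepsilon\}$ for its open $\varepsilon$-enlargement, and record the elementary reformulation: for compact $A,B$ one has $\mathfrak{h}(A,B)<\varepsilon$ if and only if $A\subseteq B_\varepsilon$ and $B\subseteq A_\varepsilon$. The ``only if'' is immediate from the definition; the ``if'' uses that $a\mapsto d(a,B)$ is continuous and hence attains its supremum on the compact set $A$, so a strict inequality at every point forces a strict inequality for the supremum. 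Granting this, the whole argument is a matter of feeding the right open sets into the definitions of upper and lower semicontinuity.

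For the implication ``$f$ continuous $\Rightarrow$ $\varphi$ continuous'', fix $x\in C$. To obtain upper semicontinuity at $x$, take an open $V\supseteq\varphi(x)$; since $\varphi(x)$ is compact, the continuous positive function $y\mapsto d(y,Y\setminus V)$ is bounded below by some $\varepsilon>0$ on $\varphi(x)$, so $(\varphi(x))_\varepsilon\subseteq V$. Continuity of $f$ at $x$ gives a neighborhood $U$ of $x$ with $\mathfrak{h}(\varphi(u),\varphi(x))<\varepsilon$, hence $\varphi(u)\subseteq(\varphi(x))_\varepsilon\subseteq V$ for $u\in U$. For lower semicontinuity at $x$, take an open $V$ with $V\cap\varphi(x)\neq\emptyset$, pick $y_0\in V\cap\varphi(x)$ and $\varepsilon>0$ with $B(y_0,\varepsilon)\subseteq V$; continuity of $f$ gives a neighborhood $U$ of $x$ with $\varphi(x)\subseteq(\varphi(u))_\varepsilon$ for $u\in U$, so $d(y_0,\varphi(u))<\varepsilon$ and $\varphi(u)$ meets $B(y_0,\varepsilon)\subseteq V$.

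For the converse, fix $x\in C$ and $\varepsilon>0$. Applying upper semicontinuity to the open set $(\varphi(x))_{\varepsilon/2}\supseteq\varphi(x)$ yields a neighborhood $U_1$ of $x$ with $\varphi(u)\subseteq(\varphi(x))_{\varepsilon/2}$ for $u\in U_1$, i.e. $\sup_{y\in\varphi(u)}d(y,\varphi(x))\le\varepsilon/2$. For the other term I would use compactness of $\varphi(x)$: cover it by finitely many balls $B(y_1,\varepsilon/2),\dots,B(y_n,\varepsilon/2)$ with $y_i\in\varphi(x)$, and apply lower semicontinuity to each $B(y_i,\varepsilon/2)$ to get neighborhoods $U^{(i)}$ of $x$ with $\varphi(u)\cap B(y_i,\varepsilon/2)\neq\emptyset$ for $u\in U^{(i)}$. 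On $U_2=\bigcap_{i=1}^n U^{(i)}$ the triangle inequality gives $d(y,\varphi(u))<\varepsilon$ for every $y\in\varphi(x)$, and since $\varphi(x)$ is compact, $\sup_{y\in\varphi(x)}d(y,\varphi(u))<\varepsilon$. Taking $U=U_1\cap U_2$ gives $\mathfrak{h}(\varphi(u),\varphi(x))<\varepsilon$ for all $u\in U$, which is continuity of $f$ at $x$.

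I expect the only genuine subtlety to be this last step, passing from lower semicontinuity to control of $\sup_{y\in\varphi(x)}d(y,\varphi(u))$: testing against a single ball does not bound the supremum over all of $\varphi(x)$ uniformly, so one really needs the finite subcover of $\varphi(x)$ together with the $\varepsilon/2$ bookkeeping. Everything else is a routine unwinding of the definitions combined with the reformulation of $\mathfrak{h}$ via $\varepsilon$-enlargements.
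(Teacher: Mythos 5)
Your proof is correct: the reformulation of $\mathfrak{h}(A,B)<\varepsilon$ via $\varepsilon$-enlargements, the two semicontinuity arguments, and in particular the finite-subcover step handling $\sup_{y\in\varphi(x)}d(y,\varphi(u))$ are all sound (the only omitted point is the trivial case $V=Y$ in the upper semicontinuity direction, where $Y\setminus V=\emptyset$). Note that the paper itself gives no proof of this statement --- it is quoted from the reference \cite{alip} --- and your argument is essentially the standard one found there, so there is nothing to reconcile.
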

\begin{lem}\label{corol}If $\varphi:C\rightarrow cc(\mathbb{R})$ is a
lower semicontinuous and $Q_{+}$-homogeneous superadditive
correspondence, then it is continuous on $int_L C.$
\end{lem}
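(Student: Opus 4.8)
The plan is to show that a lower semicontinuous, $\mathbb{Q}_+$-homogeneous superadditive correspondence $\varphi:C\to cc(\mathbb{R})$ is automatically upper semicontinuous at every interior point of $C$ (relative to $L$), which by the quoted characterization theorem gives continuity in the Hausdorff metric. Since the values $\varphi(x)$ are nonempty compact convex subsets of $\mathbb{R}$, each $\varphi(x)$ is a closed bounded interval $[g(x),h(x)]$, so the whole problem reduces to controlling the two real-valued endpoint functions. Superadditivity $\varphi(x+y)\supseteq\varphi(x)+\varphi(y)$ translates into $g(x+y)\le g(x)+g(y)$ (subadditive lower endpoint) and $h(x+y)\ge h(x)+h(y)$ (superadditive upper endpoint), together with $\mathbb{Q}_+$-homogeneity of both; lower semicontinuity of $\varphi$ forces $g$ to be upper semicontinuous and $h$ to be lower semicontinuous. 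The goal is then the reverse semicontinuities at interior points, i.e. $g$ lower semicontinuous and $h$ upper semicontinuous on $\mathrm{int}_L C$.

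First I would establish local boundedness of $\varphi$ on a neighborhood of a fixed point $x_0\in\mathrm{int}_L C$. This is exactly where Lemma~\ref{bound} enters: taking the singleton family $\{\varphi\}$, the pointwise-bounded hypothesis is automatic (each $\varphi(x)$ is compact hence bounded in $\mathbb{R}$), so $\varphi(K)$ is bounded for any convex compact $K\subseteq C$; choosing $K$ to be a small simplex-like convex compact neighborhood of $x_0$ inside $\mathrm{int}_L C$ gives a uniform bound $|g|,|h|\le M$ on $K$. Next I would exploit the finite cone-basis: write the basis as $\{e_1,\dots,e_n\}$ and note that near an interior point one can move in all coordinate directions while staying in $C$. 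The key algebraic device is the standard "sandwich" trick for sub/superadditive functions: for $x$ close to $x_0$, pick a large rational $N$ and a point $w\in C$ with $x_0 = \tfrac1N x + (1-\tfrac1N)w$ type identities (as in the proof of Lemma~\ref{bound}), so that superadditivity plus homogeneity let one bound $h(x)$ above by $N h(x_0) - (N-1)h(w)$ and, running the inequality the other way, bound $h(x)$ below; as $x\to x_0$ one can take $w\to x_0$ too, and the uniform bound $M$ makes the error terms vanish. The same scheme applied to $g$ gives the missing semicontinuity there.

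More concretely, I would prove the quantitative estimate that for $x,x_0\in\mathrm{int}_L C$ with $x$ near $x_0$, both $|g(x)-g(x_0)|$ and $|h(x)-h(x_0)|$ are controlled by a constant times the "distance" of $x$ to $x_0$ measured in the coordinates of the basis — effectively showing $g$ and $h$ are continuous, not merely semicontinuous. One writes $x = x_0 + \sum t_i e_i$ with small rationals $t_i$ (first for rational perturbations, using $\mathbb{Q}_+$-homogeneity and sub/superadditivity to get $h(x_0) + \sum_{t_i>0} t_i h(e_i) \ge h(x) \ge h(x_0) - \sum_{t_i<0}(-t_i)\,(\text{something bounded})$, the lower bound coming from applying superadditivity to $x + \sum_{t_i<0}(-t_i)e_i \supseteq$ decomposition), then passes to real perturbations by a squeeze between rational over- and under-approximations together with the semicontinuity already known (upper s.c. of $g$, lower s.c. of $h$) to pin down the limit. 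Combining the two one-sided estimates yields continuity of $g$ and $h$ at $x_0$, hence upper semicontinuity of $\varphi$ at $x_0$, and with the assumed lower semicontinuity, continuity on $\mathrm{int}_L C$.

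The main obstacle I anticipate is the passage from rational to real perturbations: $\mathbb{Q}_+$-homogeneity and the additive-type inequalities only speak directly about rational combinations of basis vectors, so to handle an arbitrary nearby point $x$ one must approximate its coordinates from above and below by rationals and then use the one-sided semicontinuity of $g$ and $h$ (which comes for free from lower semicontinuity of $\varphi$) to close the gap — keeping careful track that all intermediate points stay inside $\mathrm{int}_L C$ so that the decompositions remain valid, and that the uniform bound from Lemma~\ref{bound} is applied on a fixed compact neighborhood independent of $x$. A secondary subtlety is correctly reading off the endpoint inequalities from the set inclusion $\varphi(x+y)\supseteq\varphi(x)+\varphi(y)$ (Minkowski sum of intervals) and verifying that lower semicontinuity of the correspondence really does give upper semicontinuity of the left endpoint and lower semicontinuity of the right endpoint, which is a short but essential bookkeeping step.
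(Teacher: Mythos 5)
Your overall frame matches the paper's: write $\varphi(x)=[g(x),h(x)]$, observe that superadditivity plus $\mathbb{Q}_+$-homogeneity make $g$ $\mathbb{Q}$-convex and $h$ $\mathbb{Q}$-concave, read off from lower semicontinuity of $\varphi$ that $g$ is u.s.c.\ and $h$ is l.s.c., and get local boundedness near $x_0\in \mathrm{int}_L C$ from Lemma~\ref{bound}. The paper then finishes by citing Mehdi's Lemma~2 (a Bernstein--Doetsch type theorem: a locally bounded Jensen-convex function, here $g$ and $-h$, is continuous on the interior), whereas you try to prove the missing semicontinuities by hand. That hand argument has a genuine directional gap: superadditivity of $h$ (subadditivity of $g$) only ever gives \emph{lower} bounds for $h$ (\emph{upper} bounds for $g$) at sums, so your concrete estimate $h(x)\le h(x_0)+\sum_{t_i>0}t_i h(e_i)$ for $x=x_0+\sum t_i e_i$ is not derivable -- indeed an upper bound on $h$ near $x_0$ is exactly the hard half of the lemma, and no decomposition of the perturbation along basis directions will produce it from the inclusion $\varphi(x+y)\supseteq\varphi(x)+\varphi(y)$ alone. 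A second slip is the claim that in the identity $x_0=\tfrac1N x+(1-\tfrac1N)w$ ``the uniform bound $M$ makes the error terms vanish'': the resulting estimate is $h(x)\le N h(x_0)-(N-1)h(w)=h(x_0)+(N-1)\bigl(h(x_0)-h(w)\bigr)$, and for fixed $N$ with $w\to x_0$ a bound $|h|\le M$ does not make $(N-1)(h(x_0)-h(w))$ small.

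The fix is already contained in your own ingredients, and it actually yields a cleaner route than the paper's. Since $\varphi$ is l.s.c., $h$ is l.s.c.\ and $g$ is u.s.c.\ at $x_0$; fix a rational $N\ge 2$ and for $x$ near $x_0$ (in $L$) set $w=x_0+\frac{x_0-x}{N-1}$, which lies in $C$ because $x_0\in \mathrm{int}_L C$ and satisfies $x_0=\tfrac1N x+(1-\tfrac1N)w$ with rational coefficients. Then $\mathbb{Q}$-concavity gives $h(x)\le N h(x_0)-(N-1)h(w)$, and letting $x\to x_0$ (so $w\to x_0$) the l.s.c.\ of $h$ at $x_0$ gives $\limsup_{x\to x_0}h(x)\le h(x_0)$, i.e.\ $h$ is u.s.c.\ there; the symmetric argument gives l.s.c.\ of $g$. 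This needs neither Lemma~\ref{bound} nor any rational approximation of the coordinates of $x$ (the rational-coefficient convexity inequalities apply to arbitrary points of $C$, so your anticipated rational-to-real passage is a red herring). Alternatively, if you want to use only local boundedness (as the paper does via Mehdi), you must arrange the combination so that the small parameter multiplies the bounded term, e.g.\ $h(x_0+tv)\le(1+t)h(x_0)-t\,h(x_0-v)$ with rational $t\to0$ and $v$ ranging in a fixed bounded set; as written, your version has the roles reversed and does not close.
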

\begin{proof}
%By a same argument than the one used in the proof of Lemma 1  in \cite{sm},  we show that $\varphi$ is continuous.
If $E=\{e_1,\ldots,e_n\}$ is a basis of a cone $C$ in $X$ and
$\varphi:C\rightarrow cc(\mathbb{R})$ is a lower semicontinuous
and $Q_{+}$-homogeneous superadditive correspondence, then the
functions $g:C\rightarrow\mathbb{R}$ and
$h:C\rightarrow\mathbb{R}$ defined by $g(x)=\inf \varphi(x)$ and
$h(x)=\sup \varphi(x)$ for each $x\in C$ are Jensen-convex and
Jensen-concave, respectively. That is $g$ and $h$ satisfy
$$g(\frac{x+y}{2})\leq \frac{g(x)+g(y)}{2}$$
and
$$h(\frac{x+y}{2})\geq \frac{h(x)+h(y)}{2},$$ for all $x,y\in C$,
respectively.
  Since $\varphi$ is compact and convex valued so
$\varphi(x)=[g(x),h(x)]$ for each $x\in C$.  By Lemma \ref{bound},
the correspondence $\varphi$ and consequently  functions $g$ and
$h$ are bounded on some neighborhood of $x_0\in int_L C$.
%$\clubsuit$ Bringing a similar proof to Local Continuity of Convex
%Functions Theorem's in \cite {alip}, we prove $g$ and $h$ are
%continuous at $x_0\in int_L C$. Let $\varepsilon>0$, since $g$ is
%bounded above on an open neighborhood of $x_0$, there exists a
%balanced neighborhood $V$ of zero, and $M\geq0$ such that
%$x_0+V\subseteq C$ and if $y\in x_0+V$, then $g(y)<g(x_0)+M$.
%
%Choose $\delta\in (0,1]\cap \mathbb{Q}$ such that $\delta
%M<\varepsilon$. Let $y\in x_0+\delta V$ and $z$ satisfy $x_0+z\in
%C$ and $x_0 -z\in C$. Since $x_0 +\delta z=(1-\delta)x_0
%+\delta(x_0 +z)$, so
%\begin{equation}\label{10}g(x_0+\delta
%z)-g(x_0)\leq\delta(g(x_0+z)-g(z)).\end{equation} Replacing $z$
%by $-z$ gives
%\begin{equation}\label{11}g(x_0-\delta z)-g(x_0)\leq\delta(g(x_0-z)-g(z)).\end{equation} Regarding to
%$x_0=\frac{1}{2}(x_0+\delta z)+\frac{1}{2}(x_0-\delta z)$, we
%have $g(x_0)\leq\frac{1}{2}g(x_0+\delta
%z)+\frac{1}{2}g(x_0-\delta z)$. Thus by two above inequalities
%(\ref{10}) and (\ref{11}) we get
%\begin{equation}\label{12}g(x_0)-g(x_0+\delta z)\leq g(x_0-\delta
%z)-g(x_0).\end{equation}  By (\ref{11}) and (\ref{12}) we obtain
%$$g(x_0)-g(x_0+\delta z)\leq g(x_0-\delta
%z)-g(x_0)\leq\delta(g(x_0-z)-g(x_0)).$$ Equation (\ref{10})
%implies
%\begin{equation}\label{13}|g(x_0+\delta
%z)-g(x_0)|\leq\delta\max\{g(x_0+\delta
%z)-g(x_0),g(x_0-z)-g(x)\}.\end{equation} Equation (\ref{13})
%implies that $|g(y)-g(x_0)|<\varepsilon$. That is $g$ is
%continuous on $int_{L}C$. By similar way it could be shown that
%$-h$ and consequently $h$ are continuous on $int_{L}C$.
Since, by Lemma 2 in \cite{m}, Jensen-convex functions $g$ and
$-h$ are continuous on $int_L C$, it implies that
%$a:L\rightarrow\mathbb{R}$, a continuous convex function
  the correspondence $\varphi$ is continuous on $int_L C$.
\end{proof}

In the case that $Y$ is a finite dimensional space we have the
following.
\begin{thm}\label{finite} Let $Y$ be a real
 finite dimensional topological
vector space and $E=\{e_1,\ldots,e_n\}$ be a finite basis of the
convex cone $C$. If $\varphi:C\rightarrow cc(Y)$ is a lower
semicontinuous and $Q_{+}$-homogeneous superadditive
correspondence, then it is positively homogeneous and continuous
on $int_{L}C$.
\end{thm}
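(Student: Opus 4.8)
The plan is to reduce the finite--dimensional statement to the scalar case already handled in Lemma~\ref{corol} by composing $\varphi$ with continuous linear functionals. Fix a basis of $Y$ so that $Y\cong\mathbb{R}^m$, and for each coordinate functional $\pi_j:Y\to\mathbb{R}$ define $\varphi_j:C\to cc(\mathbb{R})$ by $\varphi_j(x)=\overline{\pi_j(\varphi(x))}=[\inf\pi_j(\varphi(x)),\ \sup\pi_j(\varphi(x))]$, which makes sense because $\varphi(x)$ is convex and compact so $\pi_j(\varphi(x))$ is a compact interval. The first step is to check that each $\varphi_j$ inherits the hypotheses: $\mathbb{Q}_+$--homogeneity is immediate from linearity of $\pi_j$; superadditivity follows because $\pi_j(\varphi(x)+\varphi(y))=\pi_j(\varphi(x))+\pi_j(\varphi(y))\subseteq\pi_j(\varphi(x+y))$ and the sum of the two intervals is the interval between the sums of endpoints; and lower semicontinuity of $\varphi_j$ follows from lower semicontinuity of $\varphi$ together with continuity of $\pi_j$ (preimages of open sets pull back appropriately). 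By Lemma~\ref{corol}, each $\varphi_j$ is continuous on $int_L C$.

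Next I would promote continuity of all the $\varphi_j$ back to continuity of $\varphi$. Since $Y$ is finite dimensional, a convex compact set $K\subseteq Y$ is determined, and its Hausdorff distance to another such set is controlled, by the behaviour of the supporting data in finitely many directions; more concretely, $\varphi(x)=\{y\in Y:\ \pi_j(y)\in\varphi_j(x)\ \text{for all }j\}$ need not hold, so instead I would argue directly: given $x_0\in int_L C$ and a neighborhood of $\varphi(x_0)$, use continuity of the finitely many endpoint functions $g_j=\inf\pi_j(\varphi(\cdot))$ and $h_j=\sup\pi_j(\varphi(\cdot))$ to get upper semicontinuity of $\varphi$ at $x_0$ (any point far from $\varphi(x_0)$ is separated from it by some $\pi_j$ up to a uniform margin on a compact neighborhood, invoking Lemma~\ref{bound} to keep things bounded), while lower semicontinuity is assumed. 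Equivalently, one can phrase this via the Hausdorff metric using the theorem quoted from \cite{alip}: in $\mathbb{R}^m$ the Hausdorff distance between convex compact sets is comparable to the sup over $j$ of the distances between the corresponding intervals $\varphi_j$, so joint continuity of the $\varphi_j$ yields continuity of $x\mapsto\varphi(x)$ in Hausdorff metric, hence $\varphi$ is continuous on $int_L C$.

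For positive homogeneity, the idea is that continuity upgrades $\mathbb{Q}_+$--homogeneity to $\mathbb{R}_+$--homogeneity. Fix $x\in int_L C$ and $t>0$; choose rationals $r_k\to t$ with $r_k x\to tx$ inside $int_L C$. Then $\varphi(r_k x)=r_k\varphi(x)\to t\varphi(x)$ in Hausdorff metric by scalar continuity of dilations, while $\varphi(r_k x)\to\varphi(tx)$ by continuity of $\varphi$ established above; since the Hausdorff metric is Hausdorff as a topology, limits are unique and $\varphi(tx)=t\varphi(x)$. A small point is to pass from $int_L C$ to the whole interior statement as phrased, which is exactly the domain where everything was proved, so no extra work is needed.

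The main obstacle I anticipate is the promotion step: reconstructing Hausdorff--metric continuity of the convex--compact--valued map $\varphi$ from continuity of its one--dimensional ``shadows'' $\varphi_j$. This requires a genuinely finite--dimensional fact (a uniform estimate relating Hausdorff distance of convex compacta to their projections onto the coordinate axes, valid because the unit sphere of $\mathbb{R}^m$ is compact), and one must be careful that lower semicontinuity of $\varphi$ is used as an input rather than re-derived, since Lemma~\ref{corol} alone only controls the scalarizations. Everything else --- transferring the hypotheses to $\varphi_j$ and the homogeneity upgrade --- is routine once the bookkeeping with intervals and linear functionals is set up.
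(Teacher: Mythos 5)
Your overall strategy---scalarize $\varphi$ through the coordinate functionals, apply Lemma~\ref{corol} to each shadow $\varphi_j$, and upgrade $\mathbb{Q}_+$-homogeneity to positive homogeneity by rational approximation and uniqueness of Hausdorff limits---is exactly the paper's strategy. The genuine gap is in your promotion step. The quantitative claim you lean on, that in $\mathbb{R}^m$ the Hausdorff distance between convex compacta is comparable to the supremum over $j$ of the distances between their coordinate projections (equivalently, that a point far from $\varphi(x_0)$ is separated from it by some $\pi_j$ with a uniform margin), is false. Take $A$ the diagonal segment joining $(0,0)$ and $(1,1)$ and $B$ the antidiagonal segment joining $(0,1)$ and $(1,0)$ in $\mathbb{R}^2$: both have projection $[0,1]$ on each axis, yet $\mathfrak{h}(A,B)\geq 1/\sqrt{2}$, and the point $(0,0)\in A$ lies at positive distance from $B$ without being separated from $B$ by either coordinate functional. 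So continuity of the finitely many shadows $\varphi_j$ cannot by itself give upper semicontinuity of $\varphi$: the support data in the $2m$ coordinate directions does not determine a convex body. A correct repair scalarizes in \emph{all} directions: for every linear functional $u$ on $Y$ the maps $x\mapsto\sup u(\varphi(x))$ and $x\mapsto\inf u(\varphi(x))$ are super-/subadditive and $\mathbb{Q}_+$-homogeneous, hence continuous on $int_{L}C$ by Lemma~\ref{bound} together with Lemma 2 of \cite{m}; then, since the support function is convex in the direction variable, pointwise convergence along $x_n\to x_0$ is uniform on the unit sphere, which is precisely Hausdorff convergence. (Your suspicion that $\varphi(x)$ is not reconstructed from the $\varphi_j(x)$ was correct; note that the paper's own reassembly, via the asserted identity $l(\varphi(x))=\varphi_1(x)\times\cdots\times\varphi_m(x)$ and Theorem 17.28 of \cite{alip}, suffers from the same defect, since that identity holds only when $\varphi(x)$ is a coordinate box.)

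A second, smaller omission: you establish $\varphi(tx)=t\varphi(x)$ only for $x\in int_{L}C$ and declare no extra work is needed, but the theorem (as the paper proves it and later uses it in Theorem~\ref{2}) asserts positive homogeneity on all of $C$. The paper handles a boundary point $x\neq 0$ by restricting $\varphi$ to the ray $\langle x\rangle=\{\lambda x:\lambda\geq 0\}$, a cone with basis $\{x\}$ in which $x$ lies in the relative interior, and applying the interior case there; you need this (or an equivalent) additional step.
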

\begin{proof} Let
$\{f_1,\ldots,f_m\}$ be a  basis of $Y$ and the norm on $Y$ be
given by $$\|\sum_{i=1}^m \lambda_i f_i\|=\sum_{i=1}^m
|\lambda_i|,$$
 for each real scalar $\lambda_1,\ldots,\lambda_m$.
For $i=1,\ldots,m$, let functions $g_i:C\rightarrow\mathbb{R}$ and
$h_i:C\rightarrow\mathbb{R}$ be defined by  $g_i(x)=\inf \varphi_i
(x)$ and  $h_i(x)=\sup \varphi_i (x)$, respectively, for all $x\in
C$ where
$$\varphi_i (x)=\{\pi_i((\lambda_1,\cdots, \lambda_m)):\, \lambda_1f_1+\cdots+ \lambda_mf_m \in \varphi(x)\}$$
and $\pi_i$ is the $i^{th}$ projection mapping.
 %Since $\varphi$ is compact and convex valued so
%for $i=1,\ldots,m$ each $\varphi_i$ is compact and convex valued
%and therefore functions $g_i$ and $h_i$ are well defined.
If we
define $l:Y\rightarrow \mathbb{R}^m$ by
$$l(\sum_{i=1}^m \lambda_i f_i)=(\lambda_1,\ldots,\lambda_m)$$ we
get $$l(\varphi(x))=\varphi_1 (x)\times\ldots\times
\varphi_m(x),$$ for all $x\in C$. For every $x\in C$ and
$i=1,\ldots,m$ we have $\varphi_i (x)=[g_i (x),h_i(x)]$. Since
$\varphi$ is lower semicontinuous,  the given norm on $Y$ implies
that
 each $\varphi_i$ is lower semicontinuous. By Lemma
\ref{corol}, each correspondence $\varphi_i$ is continuous on
$int_{L} C$ and using Theorem 17.28 in \cite{alip}, it implies
that  $l(\varphi)$ and consequently $\varphi$ is continuous on
$int_{L} C$.

Now we show that $\varphi$ is positively homogeneous. Since
$\varphi$ is compact-valued so $\varphi(tx)=t\varphi(x)$ for $x=0$
and $ t>0$. Let $0\neq x\in int_{L}C$ and $t>0$ be given.  There
is a sequence $(t_n)_n$ in $\mathbb{Q}_+$ such that
$t_n\rightarrow t$. By the continuity of $\varphi$ we have
$\varphi(t_n x)\rightarrow\varphi(tx)$, in the Hausdorff metric
topology on $cc(Y)$. On the other hand $\varphi(t_n x)=t_n
\varphi(x)\rightarrow t\varphi(x)$. Therefore
$t\varphi(x)=\varphi(tx)$ for each $0\neq x\in int_L C$ and $t>0$.
If $x$ is an arbitrary nonzero  element of $C$ and $t>0$, then the
correspondence $\varphi_0:\{\lambda x :\lambda\geq0\}=\langle
x\rangle\rightarrow cc(Y)$ is positively homogeneous on
$int_{M}\langle x\rangle$, where $M=\langle x\rangle-\langle
x\rangle$. Therefore  $\varphi$ is positively homogeneous on $C$.
\end{proof}

The next example shows that Theorem \ref{finite} does not guarantee
the continuity of $\varphi$ on the whole $C$.

\begin{ex} {\rm Define
$\varphi:[0,+\infty)\times[0,+\infty)\rightarrow
\mathbb{R}^2$ by

$$\varphi(x,y)=\left\{%
\begin{array}{ll}
    \{(0,0)\} &  {x\geq 0 , y=0 ;} \\
  \{(t,0):0\leq t\leq x\} & {x\geq 0 , y>0.} \\
\end{array}%
\right.$$ Obviously $\varphi$ is a lower semicontinuous and
positively homogeneous superadditive correspondence. But it is not
continuous on the  $C=[0,\infty)\times[0,\infty)$.}
\end{ex}

%not held for every superadditive correspondence.
%\begin{ex} {\rm Define
%$\varphi(x)=[0,[x]]$. It is easy to see that $\varphi$ is a
 %superadditive correspondence where it is not positively
 %homogeneous.
%Obviously $\varphi$ is not continuous at positive integer
%numbers.}\end{ex}
In the next result we show that the additive selections can even
be linear.
 %With regard to the results in \cite{sm}, it is natural to ask
%whether the additive selections can be linear. We bring a
%corollary which investigates a condition that answer is positive.
\begin{thm} \label{gen} Let $C$ be a convex cone with basis and  $Y$ be a real Hausdorff locally
convex topological vector space. Then a superadditive
correspondence $\varphi:C\rightarrow cc(Y)$ admits a linear
selection provided that  $\varphi$ is lower semicontinuous on
$int_L C$.
\end{thm}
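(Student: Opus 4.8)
The plan is to combine the existence of an \emph{additive} selection (Lemma~1 of \cite{sm}, which applies since $C$ has a basis, $Y$ is locally convex, and $\varphi$ is $cc(Y)$-valued) with an extension/regularization argument that upgrades additivity to full linearity, using the lower semicontinuity hypothesis on $int_L C$ to push the argument from rational scalars to all positive reals. First I would invoke \cite{sm} to obtain an additive selection $a:C\to Y$ with $a(x)\in\varphi(x)$ for all $x\in C$. Additivity of $a$ on the cone $C$ immediately gives $\mathbb{Q}_+$-homogeneity of $a$ by the standard Cauchy-equation argument: $a(qx)=qa(x)$ for every $q\in\mathbb{Q}_+$ and $x\in C$. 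The real work is to show $a$ can be taken positively homogeneous, i.e. $a(tx)=ta(x)$ for all $t>0$.

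The key observation is that $\varphi$ itself is positively homogeneous on $int_L C$: for a nonzero $x\in int_L C$ and $t>0$, pick $t_n\in\mathbb{Q}_+$ with $t_n\to t$; by Lemma~\ref{corol} applied coordinatewise (equivalently by Theorem~\ref{finite}, since the argument only used lower semicontinuity on $int_L C$), the correspondence is continuous there, so $\varphi(t_nx)=t_n\varphi(x)\to t\varphi(x)$ in Hausdorff metric while $\varphi(t_nx)\to\varphi(tx)$, hence $\varphi(tx)=t\varphi(x)$. Then for the additive selection $a$, fix $x\in int_L C$ and consider the map $t\mapsto a(tx)$ on $(0,\infty)$; it satisfies $a((t_1+t_2)x)=a(t_1x)+a(t_2x)$ and $a(tx)\in\varphi(tx)=t\varphi(x)$, so $t^{-1}a(tx)\in\varphi(x)$, a fixed compact convex set; an additive function on the half-line taking values in a bounded set (in a Hausdorff TVS) must be linear, giving $a(tx)=ta(x)$ for all $t>0$ and $x\in int_L C$. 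To extend to the boundary of $C$, I would use the same device as in the proof of Theorem~\ref{finite}: for an arbitrary nonzero $x\in C$, restrict to the ray $\langle x\rangle=\{\lambda x:\lambda\ge 0\}$, which has $x$ in its relative interior $int_M\langle x\rangle$ where $M=\langle x\rangle-\langle x\rangle$, and apply the interior case to the restricted superadditive correspondence; positive homogeneity of $a$ then propagates to all of $C$.

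The main obstacle I anticipate is justifying that the additive selection furnished by \cite{sm} is \emph{globally} available on $C$ (not just on $int_L C$) while the lower semicontinuity is only assumed on $int_L C$ — one must check that Lemma~1 of \cite{sm} does not actually need lower semicontinuity for mere existence of an additive selection (indeed it is a Hahn--Banach/Rådström-type sandwich argument that needs only superadditivity and compact convex values), and that the hypothesis on $int_L C$ is used solely to obtain the positive-homogeneity upgrade via continuity. A secondary technical point is the step ``additive and locally bounded on a half-line $\Rightarrow$ linear'': in a general Hausdorff TVS one argues that $q\mapsto a(qx)-qa(x)$ vanishes on $\mathbb{Q}_+$ and, being the restriction of an additive map with values in the bounded set $a(x)+(\text{bounded})$, the function $t\mapsto a(tx)-ta(x)$ is a bounded additive (hence $\mathbb{Q}$-linear) map $\mathbb{R}\to Y$, which forces it to be identically zero since a nonzero $\mathbb{Q}$-linear map into a Hausdorff TVS is unbounded on $(0,1)$. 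Assembling these pieces yields a linear selection of $\varphi$.
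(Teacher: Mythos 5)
Your first step is fine and matches the paper: the existence of an additive (hence automatically $\mathbb{Q}_+$-homogeneous) selection $a$ of $\varphi$ comes from the sandwich-type selection theorems of \cite{sm} (the paper invokes Theorems 3 and 4 there, not Lemma 1), and indeed no lower semicontinuity is needed for that. The genuine gap is your pivotal claim that $\varphi$ itself is positively homogeneous on $int_L C$. Theorem \ref{gen} does not assume $\varphi$ to be $\mathbb{Q}_+$-homogeneous, so the identity $\varphi(t_nx)=t_n\varphi(x)$ that drives your limiting argument is simply not available; moreover, Lemma \ref{corol} and Theorem \ref{finite} are proved for $\mathbb{Q}_+$-homogeneous correspondences with values in $\mathbb{R}$ (resp.\ a finite-dimensional space), and there is no ``coordinatewise'' version when $Y$ is an arbitrary locally convex space. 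The claim is in fact false under the stated hypotheses: on $C=[0,\infty)$ the correspondence $\varphi(x)=[0,x^2]$ is superadditive, continuous and $cc(\mathbb{R})$-valued, yet $\varphi(tx)\neq t\varphi(x)$. Since your ray argument needs $a(tx)\in\varphi(tx)=t\varphi(x)$ to conclude that $t\mapsto t^{-1}a(tx)$ stays in the fixed bounded set $\varphi(x)$, the homogeneity upgrade collapses at this point; the auxiliary fact you use afterwards (an additive map on a half-line, bounded in the appropriate sense, must be linear) is correct, but the boundedness input is exactly what is missing.

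A second, smaller problem is your extension to boundary points by restricting to the ray $\langle x\rangle$: lower semicontinuity is assumed only on $int_L C$, so for a boundary ray there is no semicontinuity to feed into the interior case (this device works in Theorem \ref{finite} because there $\varphi$ is lower semicontinuous on all of $C$). The paper handles both issues differently: it first obtains boundedness of $\varphi$, hence of $a$, on a neighborhood of a point of $int_L C$ (via Lemma \ref{bound}), then applies Mehdi's result (Lemma 2 of \cite{m}) to the additive, locally bounded real functions $\Lambda\circ a$ for continuous linear functionals $\Lambda$ on $Y$ --- this is where local convexity enters, through separation of points --- to get $a(\alpha x)=\alpha a(x)$ on $int_L C$; boundary points are then treated purely algebraically, using that $\frac{1}{2}(x+y)\in int_L C$ for $y\in int_L C$ (Lemma 5.28 of \cite{alip}) together with additivity and $\mathbb{Q}_+$-homogeneity of $a$. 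Replacing your ``$\varphi$ is positively homogeneous'' step by such a local-boundedness-plus-functionals argument, and the ray trick by the midpoint trick, would repair the proof.
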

\begin{proof} First we show that if $C$ is a convex cone with  finite basis
then the lower semicontinuous superadditive correspondence
$\varphi:C\rightarrow cc(Y)$ admits a linear selection. By Theorem
3 in \cite{sm}, $\varphi$ admits an additive selection
$a:C\rightarrow Y$. Lemma \ref{bound} implies that  $\varphi$ and
consequently additive selection $a$ are  bounded on each bounded
neighborhood $V$ of $x\in int_L C$. By Lemma 2 in \cite{m},
additive function $\Lambda\circ a:C\rightarrow \mathbb{R}$ defined
by $\Lambda\circ a(x)=\Lambda(ax)$ is continuous on $int_L C$ for
each continuous linear functional $\Lambda$ on $Y$. Thus $\Lambda
a(\alpha x)=\Lambda\alpha a(x)$ for each $\alpha>0$ and $x\in
int_L C$. Therefore  $a$ is positively homogeneous on $ int_L C$.
If $x\in C$, then by Lemma 5.28 in \cite{alip},
$\frac{1}{2}(x+y)\in int_L C$ for each $y\in int_L C$ and so
$$\alpha\frac{a(x)}{2}+\alpha\frac{a(y)}{2}=a(\alpha\frac{x+y}{2})=\frac{1}{2}(a(\alpha x)+a(\alpha y)).$$
Since $a(\alpha y)=\alpha a(y)$ for each  $y\in int_L C$, so $a$
is a bounded linear selection of $\varphi$.
%Conversely, if $a$ is a linear selection
%of the compact convex valued superadditive correspondence
%$\varphi$, then Jensen-convex function $g$ and Jensen-concave
%function $h$ where $\varphi(x)=[g(x),h(x)]$ for each $x\in C$ are
%bounded on bounded neighborhoods of each point of $int_L C$,
%respectively. So functions $g$ and $h$ and consequently
%correspondence $\varphi$ are continuous on $int_L C$.
Now let $C$ be a convex cone with arbitrary  basis. By Theorem 4
in \cite{sm}, $\varphi$ admits an additive selection
$a:C\rightarrow Y$. Let  $x\in C$ be fixed. There are vectors
$e_1,\cdots,e_n$ and non-negative scalars
$\lambda_1,\cdots,\lambda_n$ such that $x=\sum_{i=1}^n \lambda_i
e_i$. Therefore superadditive correspondence
$\varphi_0:C_0\rightarrow cc(Y)$ with $\varphi_0(z)=\varphi(z)$
for $z\in C_0$, is a lower semicontinuous superadditive
correspondence on  $int_{L_0}C_0$, where $C_0$ and $L_0$ are  the
convex cone  and the linear space generated by
$\{e_1,\cdots,e_n\}$, respectively. Similar to the previous case
$a_0:C_0\rightarrow Y$ defined by $a_0(z)=a(z)$ for $z\in C_0$ is
linear so $a(\alpha x)=\alpha a(x)$ for each $\alpha>0$. That is
$a$ is a linear selection.
\end{proof}

%The converse of Theorem \ref{gen} is also true if $C$ has a finite
%basis.  Indeed, let $\varphi(x)=[g(x),h(x)]$ for each $x\in C$
%
%if $a$ is a linear selection of the compact, convex valued
%superadditive correspondence $\varphi$, then Jensen-convex
%function $g$ and Jensen-concave function $h$ where , are bounded
%on bounded neighborhoods of each point of $int_L C$, respectively.
%So functions $g$ and $h$ and consequently correspondence $\varphi$
%are continuous on $int_L C$. $\clubsuit$

%$\clubsuit$At the following we bring results for giving a
%representation of a family of linear selections of some
%superadditive correspondences.$\clubsuit$
%%{\rm\cite{ka}}
\begin{thm}\label{kn.a} Let $Y$ be a real normed space. If $\varphi:C\rightarrow cc(Y)$ is   linear, then
$\varphi$ is automatically continuous.
\end{thm}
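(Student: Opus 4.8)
The plan is to use the finite cone-basis of $C$ to write $\varphi$ explicitly as a Minkowski combination of finitely many fixed compact convex sets, and then to read off continuity from the (Lipschitz) continuity of addition and nonnegative scalar multiplication on $cc(Y)$ in the Hausdorff metric, finally invoking the metrization theorem of \cite{alip} quoted above to pass from Hausdorff-metric continuity of $x\mapsto\varphi(x)$ to continuity of the correspondence $\varphi$.

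First I would fix a finite cone-basis $E=\{e_1,\dots,e_n\}$ of $C$ and set $L=\mathrm{span}(E)$. Since $E$ is linearly independent, every $x\in C$ has a unique representation $x=\sum_{i=1}^n\lambda_i(x)e_i$ with $\lambda_i(x)\ge 0$, and the coordinate maps $\lambda_i$ are linear functionals on $L$. Because $L$ is a finite-dimensional subspace of the Hausdorff topological vector space $X$, its subspace topology is the Euclidean one, so each $\lambda_i$ is continuous on $C$. Next, from $\varphi(0)=\varphi(0)+\varphi(0)$ and the boundedness of the compact set $\varphi(0)$ one gets $\varphi(0)=\{0\}$, after which additivity and positive homogeneity of $\varphi$ yield
$$\varphi(x)=\sum_{i=1}^n\lambda_i(x)\,A_i\qquad(x\in C),\qquad A_i:=\varphi(e_i)\in cc(Y).$$

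Then I would consider the map $\Phi:[0,\infty)^n\to cc(Y)$, $\Phi(t_1,\dots,t_n)=\sum_{i=1}^n t_iA_i$, which is well defined since a finite Minkowski sum of compact convex sets is compact and convex, and which satisfies $\varphi=\Phi\circ(\lambda_1,\dots,\lambda_n)$ on $C$. Using the subadditivity estimate $\mathfrak h(B_1+B_2,B_1'+B_2')\le\mathfrak h(B_1,B_1')+\mathfrak h(B_2,B_2')$ together with $\mathfrak h(sB,tB)\le|s-t|\sup_{b\in B}\|b\|$ for $s,t\ge 0$, and the boundedness of each $A_i$, one obtains
$$\mathfrak h\big(\Phi(t),\Phi(t')\big)\le\sum_{i=1}^n|t_i-t_i'|\,\sup_{a\in A_i}\|a\|,$$
so $\Phi$ is Lipschitz for $\mathfrak h$. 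Composing with the continuous map $(\lambda_1,\dots,\lambda_n)$ shows that $x\mapsto\varphi(x)$ is continuous from $C$ into $(cc(Y),\mathfrak h)$, and by the theorem of \cite{alip} stated above this is precisely continuity of the correspondence $\varphi$ on all of $C$.

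The only points requiring care are the bookkeeping in deriving the displayed formula for $\varphi$ — in particular the identity $\varphi(0)=\{0\}$ and the treatment of vanishing coordinates — and the verification of the two elementary Hausdorff-metric inequalities; neither constitutes a genuine obstacle. The decisive structural input is finite dimensionality of $L$, which is what forces the coordinate functionals $\lambda_i$ to be continuous and thereby makes the whole argument work without any semicontinuity hypothesis on $\varphi$.
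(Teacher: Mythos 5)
Your proof is correct, but it takes a genuinely different route from the paper. You work directly with the finite cone-basis: using $\varphi(0)=\{0\}$ (which does follow from $\varphi(0)=\varphi(0)+\varphi(0)$ and compactness), additivity and positive homogeneity give the explicit Minkowski decomposition $\varphi(x)=\sum_{i=1}^n\lambda_i(x)\varphi(e_i)$, the coordinate functionals are continuous because the finite-dimensional subspace $L$ of a Hausdorff topological vector space carries the Euclidean topology, and the two standard Hausdorff-metric estimates yield a Lipschitz bound for $(t_1,\dots,t_n)\mapsto\sum_i t_iA_i$; the quoted equivalence between Hausdorff-metric continuity and continuity of a compact-valued correspondence then finishes the argument. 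The paper instead embeds $cc(Y)$ into the R\aa dstr\"om normed space $\Delta$ of equivalence classes of pairs, regards $\varphi$ as a point-valued linear map $f(x)=[\varphi(x),\{0\}]$, extends it linearly to the finite-dimensional space $C-C$, and invokes automatic continuity of linear operators with finite-dimensional domain, recovering Hausdorff-metric continuity from $\|f(x)-f(y)\|=\mathfrak h(\varphi(x),\varphi(y))$. Both arguments hinge on the same structural input (finite dimensionality of $L=C-C$), but yours is more elementary and self-contained — it avoids R\aa dstr\"om's embedding theorem altogether and even produces an explicit Lipschitz modulus in terms of the coordinates and the bounds $\sup_{a\in A_i}\|a\|$ — whereas the paper's embedding argument is shorter once the machinery is available and fits the device used elsewhere in the paper (e.g., in the proof of Theorem \ref{finite}'s homogeneity step and Theorem \ref{2}), since it reduces everything to classical linear-operator theory without any coordinate bookkeeping.
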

\begin{proof}
Let $\sim$ denote the R$\dot{\mbox{a}}$dstr$\ddot{\mbox{o}}$m's
equivalence relation between pairs of  $cc(Y)$ defined by
 $$ (A,B)\sim (D,E)\Leftrightarrow A+E=B+D,\,\,\,\,\,\,\,\,\,\,\,\,(A,B,D,E\in cc(Y)) $$ and $[A,B]$ denote the
 equivalence class of  $(A,B)$ (\cite{rad}).
 The set of all equivalence classes
$\Delta$ with the operations
$$ [A,B]+[D,E]=[A+D,B+E], $$
$$\lambda [A,B]=[\lambda A,\lambda B]\,\,\,\,\,\,\,\,\,\,\,\,\,\,\,(\lambda\geq 0),$$
$$\lambda [A,B]=[-\lambda B,-\lambda A]\,\,\,\,\,\,\,\,\,\,\,\,\,(\lambda< 0),$$ and
the norm $$\|[A,B]\|:=\mathfrak{h}(A,B),$$ constitute a  real
linear normed space (\cite{rad}). The function
$f:C\rightarrow\Delta$ defined by
$$f(x)=[\varphi(x),\{0\}],$$ is linear and  can be
extended to a linear operator $\hat{f}:C-C\rightarrow\Delta$ by
$$\hat{f}(x-y)=f(x)-f(y), \,\,\,\,\,\,\,\,\,\,\,(x,y\in C).$$ Since $C-C$ is of  finite dimension, $\hat{f}$
and consequently $f$ are continuous. Let $x_0\in C$ and $(x_n)$
be a sequence of $C$ converging to $x$. Then
$$\lim_{n\rightarrow
\infty}\mathfrak{h}(\varphi(x_n),\varphi(x_0))=\lim_{n\rightarrow
\infty}\|f(x_n)-f(x_0)\|=0,$$ that is, $\varphi$ is continuous.
\end{proof}

%\begin{corol}\label{1} Let $Y$ be a real finite dimensional topological vector space
%and $C$ be a convex cone in $X$ with a finite basis
%$E=\{e_1,e_2,...,e_n\}$. If $\varphi:C\rightarrow cc(Y)$ is a
% lower semicontinuous and $Q_{+}$-homogeneous superadditive
%correspondence, then there exists a continuous linear
%correspondence $\psi:C\rightarrow cc(Y)$ contained in $\varphi$,
%that is, $\psi(x)\subseteq \varphi(x)$, for every $x\in C$.
%\end{corol}
%\begin{proof} By Theorem \ref{finite}, $\varphi$ is positively homogeneous. Now define $\psi:C\rightarrow cc(Y)$ by $$\psi(x)=\sum_{i=1}^{n}
%\lambda_i \varphi (e_i),$$ for every $x=\sum_{i=1} ^{n} \lambda_i
%e_i\in C$. Theorem \ref{kn.a} says that $\psi$ is a linear
%continuous correspondence.
%\end{proof}
\begin{thm}\label{2} Let $\acute{C}$ be a convex cone with  finite basis
in $Y$ and $\varphi:C\rightarrow cc(\acute{C})$ be a superadditive
correspondence. Then $\varphi$ is continuous on $int_L C$ and
there exists a family of continuous linear functions contained in
$\varphi$.
\end{thm}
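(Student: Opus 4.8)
The plan is to build on Theorem \ref{gen} and Theorem \ref{kn.a} to obtain both assertions simultaneously. First, observe that since $\acute{C}$ is a convex cone with finite basis in $Y$, it sits inside the finite-dimensional subspace $\acute{L} = \acute{C} - \acute{C}$, which is a real normed space; in particular $cc(\acute{C})$ may be regarded as a subfamily of $cc(\acute{L})$, and compact convex subsets of $\acute{C}$ are compact convex subsets of the locally convex space $\acute{L}$. So $\varphi : C \rightarrow cc(\acute{L})$ is a superadditive correspondence into the convex, compact subsets of a Hausdorff locally convex space. However, to invoke Theorem \ref{gen} I need lower semicontinuity of $\varphi$ on $int_L C$, which is not among the hypotheses. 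The key observation that removes this obstacle is that a superadditive correspondence with \emph{convex, compact values in a cone with finite basis} is automatically lower semicontinuous on $int_L C$: indeed, reduce coordinatewise as in the proof of Theorem \ref{finite}, writing $\acute{C}$ in terms of a basis $\{f_1,\dots,f_m\}$ and setting $\varphi_i(x) = [g_i(x), h_i(x)]$ with $g_i = \inf \varphi_i$, $h_i = \sup \varphi_i$; superadditivity forces $g_i$ to be Jensen-convex and $h_i$ Jensen-concave, and — crucially — the values lying in the cone $\acute{C}$ give one-sided bounds ($g_i \ge 0$ componentwise after a suitable choice of basis adapted to $\acute{C}$), so by Lemma 2 in \cite{m} these Jensen-convex/concave functions are continuous on $int_L C$. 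Continuity of $g_i, h_i$ yields continuity, hence lower semicontinuity, of each $\varphi_i$ and therefore of $\varphi$ on $int_L C$ via Theorem 17.28 in \cite{alip}.

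Once lower semicontinuity on $int_L C$ is in hand, Theorem \ref{gen} applies directly and produces a linear selection $a : C \rightarrow \acute{L}$ with $a(x) \in \varphi(x)$ for all $x$. Being linear with values in the finite-dimensional space $\acute{L}$, $a$ extends to a linear map on $C - C$, which is finite-dimensional, hence $a$ is automatically continuous (this is exactly the argument in the proof of Theorem \ref{kn.a}, or one can apply Theorem \ref{kn.a} itself to the singleton-valued correspondence $x \mapsto \{a(x)\}$). This gives one continuous linear function contained in $\varphi$; the continuity of $\varphi$ on $int_L C$ was already established in the previous paragraph, so both conclusions of the theorem hold for a single selection.

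To obtain a whole \emph{family} of continuous linear selections, I would exploit the freedom in the selection theorem together with translation within the values. For each point $x$ in the (relative) interior and each extreme-like direction, one can start the inductive construction of the additive selection (Theorems 3–4 in \cite{sm}) from a different choice of the initial value on the basis vectors $e_1, \dots, e_n$ of $C$: any choice $a(e_j) \in \varphi(e_j)$ that is consistent — i.e. such that $\sum_j \lambda_j a(e_j) \in \varphi(\sum_j \lambda_j e_j)$ for all $\lambda_j \ge 0$, which holds for instance whenever each $a(e_j)$ is chosen so that the resulting linear map stays inside $\varphi$, a nonempty condition by the one-selection case — extends to a linear selection, automatically continuous as above. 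Varying these initial data (for example, using that $\varphi(e_j)$ has more than one point whenever $\varphi$ is not single-valued, and that a convex combination of two linear selections is again a linear selection contained in $\varphi$ by convexity of the values) yields the desired family.

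\medskip

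\emph{Main obstacle.} The delicate point is the first paragraph: establishing that superadditivity together with the values lying in a cone with finite basis \emph{forces} lower semicontinuity on $int_L C$ without any a priori continuity assumption. This hinges on getting genuine one-sided (not two-sided) local bounds for the Jensen-convex functions $g_i$ — the containment $\varphi(x) \subseteq \acute{C}$ must be translated, via a basis of $Y$ adapted to $\acute{C}$, into componentwise inequalities $g_i \ge 0$ (or $\le 0$), after which Lemma 2 in \cite{m} does the rest. If that adaptation of bases is not possible in the stated generality, one instead notes that $\varphi$ maps into the finite-dimensional normed space $\acute{L}$ and appeals to Theorem \ref{finite} directly once lower semicontinuity is known — so the real content is precisely squeezing lower semicontinuity out of the cone-valued hypothesis. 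The family-of-selections part is comparatively routine given convexity of the values.
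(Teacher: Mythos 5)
Your first paragraph is where the proposal breaks down. You claim that superadditivity together with values in a cone with finite basis \emph{automatically} gives lower semicontinuity on $int_L C$, and you derive this from Jensen-convexity of $g_i=\inf\varphi_i$ and Jensen-concavity of $h_i=\sup\varphi_i$. But those midpoint inequalities are obtained (as in Lemma \ref{corol}) only by using $\mathbb{Q}_+$-homogeneity, i.e. $\varphi(x/2)=\tfrac12\varphi(x)$; for a merely superadditive $\varphi$, as in Theorem \ref{2}, one only gets that $g_i$ is subadditive and $h_i$ superadditive, so Lemma 2 of \cite{m} does not apply. Moreover your one-sided bound is on the wrong side: a Jensen-convex function needs a local \emph{upper} bound (e.g. $|a|$ for a discontinuous additive $a$ is Jensen-convex and nonnegative yet nowhere continuous), and the correct route, available only under homogeneity, is $-h_i\leq 0$, hence $h_i$ continuous, hence $g_i\leq h_i$ locally bounded above. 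In fact the automatic-lsc principle you propose is simply false: on $C=\acute{C}=[0,\infty)$ the correspondence $\varphi(x)=[0,\lfloor x\rfloor]$ is superadditive with compact convex values in the cone, but it is not lower semicontinuous at the integer points of $int_L C$. So no argument along these lines (nor a fallback through Theorem \ref{finite}, which also assumes $\mathbb{Q}_+$-homogeneity) can extract lower semicontinuity from superadditivity alone; the paper handles this step by invoking Lemma 1 of \cite{sm} for the coordinate correspondences $\varphi_i:C\rightarrow cc([0,+\infty))$, and the counterexample shows the first assertion really rests on the hypotheses of that lemma rather than on any automatic-lsc observation. Since your route to the selection goes through Theorem \ref{gen}, whose hypothesis is exactly the lower semicontinuity you did not establish, the selection part of your argument also loses its foundation.

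For the family of continuous linear selections, your sketch leaves the key point unproved: you need $\sum_j\lambda_j a(e_j)\in\varphi(\sum_j\lambda_j e_j)$ for all $\lambda_j\geq 0$, and saying this "holds whenever the resulting linear map stays inside $\varphi$" is circular, while convex combinations of linear selections give a genuine family only if two distinct selections are already in hand. The paper supplies the missing mechanism: by Theorem 1 in \cite{sm} there is a \emph{minimal} $\mathbb{Q}_+$-homogeneous superadditive $\phi\subseteq\varphi$; the coordinatewise argument now applies legitimately (each $\phi_i$ is $\mathbb{Q}_+$-homogeneous with values in $[0,+\infty)$) and yields that $\phi$ is positively homogeneous, whence superadditivity gives $\sum_i\lambda_i\phi(e_i)\subseteq\phi(\sum_i\lambda_i e_i)$. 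Consequently \emph{every} choice $y_i\in\phi(e_i)$, $i=1,\dots,n$, defines a map $x=\sum_i\lambda_i e_i\mapsto\sum_i\lambda_i y_i$ that is linear, contained in $\phi\subseteq\varphi$, and automatically continuous since $C-C$ is finite dimensional (Theorem \ref{kn.a}); this is exactly the family parametrized by the multimatrix $M_\phi=\acute{l}(\phi(e_1))\times\cdots\times\acute{l}(\phi(e_n))$ in the paper. The consistency condition you could not verify is precisely what passing to the minimal $\mathbb{Q}_+$-homogeneous subcorrespondence and its positive homogeneity buys, and that idea is absent from your proposal.
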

\begin{proof}  Let $E=\{e_1,e_2,...,e_n\}$ and
$\acute{E}=\{\acute{e}_1,\acute{e}_2,...,\acute{e}_m\}$ be basis
for $C$ and $\acute{C}$, respectively. Each superadditive
correspondence $\varphi_i:C\rightarrow cc([0,+\infty))$, $
i=1,\cdots,m$ defined as  in the proof of Theorem \ref{finite} is
continuous on $int_L C$ according to  Lemma 1 in \cite{sm}.
 Now, Theorem 17.28 in \cite{alip} implies the continuity of $\varphi$
on $int_L C$.

From  Theorem 1 in \cite{sm}, there exists a minimal
$Q_{+}$-homogeneous superadditive correspondence
$\phi:C\rightarrow cc(\acute{C})$ contained in $\varphi$. Again,
for each $i=1,\cdots,m$ consider the  correspondence
$\phi_i:C\rightarrow cc([0,+\infty))$ as given  in the proof of
Theorem \ref{finite}.  Then by Lemma 1 in \cite{sm}, each
$Q_{+}$-homogeneous superadditive correspondence $\phi_i$ and
consequently $\phi$  are positively homogeneous. By Theorem
\ref{kn.a}, $\phi$ contains a continuous linear correspondence
$\psi:C\rightarrow cc(\acute{C})$ with $ \psi(x)= \sum_{i=1}^n
\lambda_i \phi(e_i)$ for every $x=\sum_{i=1}^n \lambda_i e_i$ in
$C$. Define $l:C-C\rightarrow {[0,\infty)^n}$ and
$\acute{l}:\acute{C}-\acute{C}\rightarrow {[0,\infty)^m}$ by
$$l(\sum_{i=1}^n \lambda_i
e_i)=(\lambda_1,\lambda_2,...,\lambda_n)^T$$ and
$$\acute{l}(\sum_{i=1}^m \gamma_i
\acute{e}_i)=(\gamma_1,\gamma_2,...,\gamma_m)^T,$$ respectively.
The functions $l$ and $\acute{l}$ are linear isomorphisms on $C-C$
and $\acute{C}-\acute{C}$, respectively. Set
$M_i=\acute{l}(\phi({e}_i))$ for $i=1,\cdots,n$ and
$M_\phi=M_1\times M_2\times ...\times M_n$. Then,  $M_\phi$ is a
convex and compact valued multimatrix. Let $x=\sum_{i=1}^n
\lambda_i e_i$ and
$$z\in
\psi(x)=\sum_{i=1}^n \lambda_i \phi(e_i).$$ There are $y_i$'s in
$\phi(e_i)$ such that $z=\sum_{i=1}^n \lambda_i y_i$. If
$y_i=\sum_{j=1}^m \gamma_{ji} \acute{e}_j$, then $z=\sum_{i=1}^n
\lambda_i \sum_{j=1}^m \gamma_{ji} \acute{e}_j$. Putting
$A=[\gamma_{ji}]_{m\times n}$ we have  $A\in M_\phi$ and
$$ \psi(x)\subseteq\{\sum_{j=1}^m \sum_{i=1}^n
\lambda_i \gamma_{ji} \acute{e}_j:\,A=[\gamma_{ji}]_{m\times n}\in
M_\phi\}.$$ On other hand, $$ \psi(x)\supseteq \{\sum_{j=1}^m
\sum_{i=1}^n \lambda_i \gamma_{ji}
\acute{e}_j:\,A=[\gamma_{ji}]_{m\times n}\in M_\phi\}.$$ Therefore
we get
$$\psi(x)= \{\sum_{j=1}^m \sum_{i=1}^n \lambda_i
\gamma_{ji} \acute{e}_j;A=[\gamma_{ji}]_{m\times n}\in
M_\phi\}=\{\acute{l}Al(x)\}_{A\in M_\phi}.$$ Since every matrix
$A$ can be considered as a continuous linear mapping, and
$\varphi(x)\supseteq\psi(x)$, the proof is complete.
\end{proof}

\end{document}